\documentclass[a4paper]{amsart}
\usepackage[utf8]{inputenc}
\usepackage[english]{babel}
\usepackage{csquotes}
\usepackage[shortcuts]{extdash}

\usepackage{amsmath}
\usepackage{amsfonts}
\usepackage{amsthm}
\usepackage{amssymb}
\usepackage{mathrsfs}
\usepackage{mathtools}
\usepackage{stmaryrd}

\usepackage[shortlabels]{enumitem}
\setlist[enumerate]{label=(\alph*)}

\usepackage{tikz}
\usepackage{tikz-cd}

\usepackage{breakurl} 
\usepackage[hidelinks, pdfauthor={Gwyn Bellamy, Gerhard Röhrle, Johannes Schmitt}]{hyperref}

\theoremstyle{plain}
\newtheorem{theorem}{Theorem}[section]

\newtheorem{proposition}[theorem]{Proposition}

\theoremstyle{definition}
\newtheorem{definition}[theorem]{Definition}

\theoremstyle{remark}
\newtheorem{remark}[theorem]{Remark}
\newtheorem{notation}[theorem]{Notation}

\DeclareMathOperator{\GL}{GL}
\DeclareMathOperator{\Hom}{Hom}
\DeclareMathOperator{\id}{id}
\DeclareMathOperator{\Irr}{Irr}
\DeclareMathOperator{\Pic}{Pic}
\DeclareMathOperator{\SL}{SL}

\DeclareMathOperator{\Sp}{Sp}
\DeclareMathOperator{\Spec}{Spec}

\DeclareMathOperator{\U}{U}

\newcommand{\CC}{\mathbb C}
\newcommand{\HH}{\mathbb H}

\newcommand{\QQ}{\mathbb Q}
\newcommand{\RR}{\mathbb R}
\newcommand{\ZZ}{\mathbb Z}

\renewcommand{\epsilon}{\varepsilon}
\renewcommand{\phi}{\varphi}
\renewcommand{\theta}{\vartheta}

\newcommand{\SRA}{\mathtt{H}}

\makeatletter
\patchcmd{\@setauthors}{\footnotesize}{}{}{}
\patchcmd{\@setauthors}{\MakeUppercase}{}{}{}
\patchcmd{\@setaddresses}{\scshape}{}{}{}
\patchcmd{\contentsnamefont}{\scshape}{\bfseries}{}{}
\patchcmd{\abstract}{\scshape}{\bfseries}{}{}
\patchcmd{\section}{\scshape}{\bfseries}{}{}
\patchcmd{\@secnumfont}{\mdseries}{\bfseries}{}{}
\patchcmd{\@captionheadfont}{\scshape}{\bfseries}{}{}
\makeatother

\title[Namikawa--Weyl groups of symplectic quotient singularities]{Namikawa--Weyl groups of\\ symplectic quotient singularities}

\author[G.\ Bellamy]{Gwyn Bellamy}
\address{Gwyn Bellamy, The Mathematics and Statistics Building, University of Glasgow, University Place, Glasgow G12 8QQ, Scotland}
\email{gwyn.bellamy@glasgow.ac.uk}

\author[G.\ Röhrle]{Gerhard Röhrle}
\address{Gerhard Röhrle, Ruhr\-/Universität Bochum, Fakultät für Mathematik, Universitätsstraße 150, 44801 Bochum, Germany}
\email{gerhard.roehrle@ruhr-uni-bochum.de}

\author[J.\ Schmitt]{Johannes Schmitt}
\address{Johannes Schmitt, Ruhr\-/Universität Bochum, Fakultät für Mathematik, Universitätsstraße 150, 44801 Bochum, Germany}
\email{johannes.schmitt@ruhr-uni-bochum.de}

\subjclass[2020]{Primary: 14E30 (Minimal model program); Secondary: 14E16 (McKay correspondence), 20F55 (Reflection and Coxeter groups)}

\keywords{symplectic reflection groups, quaternionic reflection groups, Na\-mi\-ka\-wa--Weyl groups, symplectic quotient singularities}

\begin{document}

\begin{abstract}
  We classify the Namikawa--Weyl groups associated to symplectic quotient singularities \(V/G\) when \(G\) is a symplectic reflection group.
  Our classification shows that every irreducible Weyl group can be realized as a factor of the Namikawa--Weyl group of \(V/G\) for a suitable \(G\).
\end{abstract}

\maketitle

\section{Introduction}

Let \(V\) be a finite\-/dimensional complex vector space endowed with a symplectic form and let \(G\leqslant \Sp(V)\) be a finite group. The linear quotient \(V/G \coloneqq \Spec \CC[V]^G\) is a symplectic quotient singularity \cite{Bea00}.
A \emph{\(\QQ\)\=/factorial terminalization} of \(V/G\) is a crepant, projective, birational morphism \(X \to V/G\) such that \(X\) is \(\QQ\)\=/factorial with only terminal singularities. 
Notice that a smooth \(\QQ\)\=/factorial terminalization is exactly a projective symplectic resolution in the sense of \cite{Bea00}.

By a result of Namikawa \cite{Nam15}, a \(\QQ\)\=/factorial terminalization \(\phi:X\to V/G\) is a \emph{relative Mori dream space} over \(V/G\), see \cite{HK00, Oht22}. This means that there are only finitely many \(\QQ\)\=/factorial terminalizations of \(V/G\) up to isomorphism and that the relations between the different isomorphism classes are controlled by a wall\-/and\-/chamber structure in the vector space \(\mathfrak c \coloneqq \Pic(X)\otimes_\ZZ \RR\).
Namikawa's key observation is that the walls in this wall\-/and\-/chamber structure in fact form a hyperplane arrangement in \(\mathfrak c\), commonly called the \emph{Namikawa arrangement}. Namikawa \cite{Nam10} introduced, in addition, a certain real reflection group \(\mathcal W_G \leqslant \mathrm{GL}(\mathfrak{c})\), nowadays called the \emph{Namikawa--Weyl group}, which permutes the hyperplanes in the Namikawa arrangement. The group \(\mathcal W_G\) only depends on \(V/G\) and not on the morphism \(\phi\).

\subsection*{Symplectic reflection groups}

Recall that an element $g \in G$ is a symplectic reflection if the rank of $1 - g$ is 2. The group $G \leqslant \Sp(V)$ is said to be a symplectic reflection group if it is generated by the symplectic reflections it contains. By Verbitsky's Theorem \cite{Ver00}, if $V/G$ admits a symplectic resolution then $G$ must be a symplectic reflection group. We assume for the remainder of the article that $G$ is a symplectic reflection group. 

The purpose of this note is to explain how recent advances in our understanding of symplectic reflection groups  \cite{GRS25,RS26} allow us to explicitly compute the Namikawa--Weyl group for any symplectic reflection group.
There are several reasons why this is important.
Firstly, as explained above, knowing the Namikawa--Weyl group is an essential part of the ongoing programme to classify all $\QQ$-factorial terminalizations of quotients $V/G$ by symplectic reflection groups.
In particular, if one wishes to count the number of $\QQ$-factorial terminalizations admitted by $V/G$ then one needs to know the Namikawa--Weyl group; see \cite{Bel16}.
Secondly, for any given family of conic symplectic singularities, it is a rather subtle problem to decide which Weyl groups can be realised as the Namikawa--Weyl group of a singularity in the family.
For instance, it was shown by Wu \cite[Thm.~1.14]{Wu23} that the Namikawa--Weyl group of a Nakajima quiver variety can never have a factor of type $F_4$.
For quotient singularities, however, it is a consequence of our classification result that every irreducible Weyl group appears as a factor of the Namikawa--Weyl group of \(V/G\) for a suitable \(G\).
More specifically:

\begin{theorem}
  For an irreducible Weyl group \(W\), there is a symplectically irreducible symplectic reflection group \(G\) such that the quotient \(V/G\) has Namikawa--Weyl group of the form \(W\times A_1^k\) for some \(k\in\{0, 1\}\).

  Conversely, if $G$ is a symplectically irreducible symplectic reflection group then the Namikawa--Weyl group of $V/G$ is of the form $W \times A_1^k$, where $W$ is an irreducible Weyl group and $0 \leqslant k \leqslant 5$, except in the case where $G$ is an exceptional complex reflection group of type $G_5, G_7, G_{10}, G_{11}, G_{18}$ or $G_{19}$.
  In these cases, the Namikawa--Weyl group has at least two irreducible factors not of type $A_1$.
\end{theorem}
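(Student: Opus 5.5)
The plan is to reduce the computation of \(\mathcal W_G\) to a combinatorial description in terms of conjugacy classes of parabolic subgroups of \(G\), and then run through the classification of symplectically irreducible symplectic reflection groups from \cite{GRS25,RS26}.

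The reduction comes from Namikawa's construction \cite{Nam10}, which I will take in the following form. The codimension-two symplectic leaves of \(V/G\) correspond to \(G\)-conjugacy classes of rank-two parabolic subgroups \(H = G_v\), where \(v\) lies in the fixed space of a symplectic reflection. A transversal slice to such a leaf is the Kleinian singularity \(\CC^2/H\), with \(H \leqslant \SL_2(\CC)\) of ADE type \(\Delta_H\). The Namikawa--Weyl group is then
\[
  \mathcal W_G \;=\; \prod_{[H]} W(\Delta_H)^{N_G(H)/H}.
\]
Here \(W(\Delta_H)^{N_G(H)/H}\) is the fixed subgroup of the Weyl group of \(\Delta_H\) under the diagram automorphisms induced by the monodromy action of \(N_G(H)/H\) on the exceptional curves. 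By the standard folding rules this fixed group is again an irreducible Weyl group: \(A_{2n-1}\) folds to \(C_n\) (or \(B_n\)), \(A_{2n}\) to \(B_n\), \(D_4\) to \(G_2\) or \(B_3\), \(E_6\) to \(F_4\), and \(D_n\) to \(B_{n-1}\). So the problem becomes the following. List the classes of rank-two parabolics \(H\) for each \(G\) in the classification. For each one, determine the ADE type of \(H\) as a subgroup of \(\SL_2(\CC)\), and compute the image of \(N_G(H)\) in \(\mathrm{Out}\) of the McKay graph, i.e. its action on the nontrivial irreducible representations of \(H\).

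Next I carry out the classification case by case. For \(G = W\) a complex reflection group acting on \(\mathfrak h \oplus \mathfrak h^*\), the parabolics \(H\) are cyclic \(\ZZ/m\), of type \(A_{m-1}\), one for each orbit of reflecting hyperplanes. The normaliser acts by inverting via duality, which gives \(B\)- or \(C\)-type folding, or leaves \(A_{m-1}\) unfolded when \(N\) acts trivially on characters. For real \(W\), \(m=2\) and each orbit of reflections gives an \(A_1\) factor. For wreath products \(\Gamma \wr S_n\) with \(\Gamma \leqslant \SL_2\) of type \(\Delta\), one gets \(W(\Delta) \times A_1\) (the \(A_1\) coming from transpositions). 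Every ADE type, together with its foldings \(B,C,F_4,G_2\), is then realised by choosing \(\Gamma\) and the symplectically primitive quaternionic groups of \cite{GRS25,RS26} appropriately. For example, \(F_4\) arises from a binary tetrahedral-type \(E_6\) parabolic whose normaliser induces the diagram involution, and \(G_2\) from a \(D_4\) parabolic with triality. This settles the existence part, with \(k \in \{0,1\}\).

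For the converse, I go through the list. For each class I check that at most one class of parabolics has non-\(A_1\) type; all other classes must be of order two, giving \(A_1\) factors. The number of such classes is bounded by \(6\) overall, which gives \(k \leqslant 5\). The exceptions \(G_5, G_7, G_{10}, G_{11}, G_{18}, G_{19}\) are precisely the complex reflection groups with two or more orbits of hyperplanes of order \(\geqslant 3\). In these cases two distinct cyclic parabolics give factors like \(A_2\) or \(B\)/\(C\) types, and I read the exact factors off the hyperplane orbit data.

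The main obstacle will be the monodromy computation, i.e. determining exactly how \(N_G(H)/H\) acts on the McKay graph of \(H\) for the primitive quaternionic groups. Here I would use the explicit generators from \cite{RS26} and compute the action of \(N_G(H)\) on \(\Irr(H)\) by conjugating characters, since the diagram automorphism is the induced permutation of nontrivial irreducibles. The bookkeeping of conjugacy classes of parabolics in the large exceptional cases is routine but lengthy and would be done by computer.
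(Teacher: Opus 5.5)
Your framework matches the paper's: Bellamy's formula \(\mathcal W_G=\prod_C W_C^{\Xi_C}\), then a case analysis over the classification, with the conjugacy-class bookkeeping taken from \cite{GRS25,RS26}. The trouble is the normaliser actions, which are the real content of the theorem. The one you commit to is wrong.

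\textbf{Complex reducible groups.} There is no inversion here. \(G\) preserves \(\mathfrak h\) and \(\mathfrak h^*\), so \(N_G(P)\) preserves \(V_P\cap\mathfrak h\) and \(V_P\cap\mathfrak h^*\). It therefore acts on \(V_P\) by matrices \(\mathrm{diag}(c,c^{-1})\), which commute with \(P\). Hence \(N_G(P)=C_G(P)\), \(\Xi_C\) acts trivially, and \(\mathcal W_G\) is a product of symmetric groups. This matters. If the diagram involution were induced, the \(A_2\) of an order-3 reflection would fold to \(B_1=A_1\), so \(G_5\) would give \(A_1^2\) and would not be an exception. Your description of the exceptions (two or more hyperplane orbits of order \(\geqslant 3\)) is correct, but only because the action is trivial, which contradicts what you asserted just before.

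\textbf{Symplectically primitive, complex imprimitive groups.} The inversion you describe does happen, but for \(\langle(\mu_dT_0)^\circledast,s\rangle\), where \(s\) swaps the two Lagrangians. There your converse check (\enquote{at most one class of non-\(A_1\) type, all others of order two}) fails. For \(T_0=G_{10}\) there are parabolics \(\mathsf C_3\) and \(\mathsf C_4\), and likewise two classes of order \(\geqslant 3\) for \(T_0=G_{11},G_{18},G_{19}\); yet \(\mathcal W_G=A_1^2\times B_2\).

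The missing idea is the paper's:
\begin{itemize}
\item \(N_G(P)\) contains some \(g^\circledast s\).
\item The identity \(s'qs'=\det(q)^{-1}q\) (Lemma~4.5 of \cite{RS26}) shows this element acts non-trivially on \(P\) whenever \(|P|>2\).
\item So \((P,N_G(P))\) restricts to a pair \((\mathsf C_k,\mathsf D_r)\), contributing \(B_{\lfloor k/2\rfloor}\); in particular \(\mathsf C_3\) contributes only \(A_1\).
\item The exception is \(\mathsf C_3\) for \(T_0\in\{G_5,G_7\}\), where \(N_G(P)=P\rtimes Z(T)^\circledast\) gives \(A_2\).
\end{itemize}
The check must be made on \(W_C^{\Xi_C}\), not on the type or order of \(P\).

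\textbf{Existence.} Wreath products \(G_n(\Gamma,\Gamma)=\Gamma\wr\mathfrak S_n\) give only simply laced types. The symplectically primitive groups give only factors \(A_1\), \(A_2\), \(B_2\), \(G_2\). So neither source yields \(B_m\) for \(m\geqslant 3\), or \(F_4\). You need the imprimitive groups \(G_n(K,H)\) with \(H\neq K\) and \(n\geqslant 3\), for example \(G_n(\mathsf D_m,\mathsf C_{2m})\) and \(G_n(\mathsf O,\mathsf T)\). You also need two facts from Proposition~\ref{prop:imprim}. First, \(N_G(P)|_{V_P}=K\), via the elements \(\mathrm{diag}(x,x^{-1},I_{2n-4})\). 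Second, there is exactly one class of \(\mathsf C_2\)-parabolics swapping two planes, which gives \(k=1\).
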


The theorem follows from Propositions \ref{prop:SL2pairs}, \ref{prop:imprim}, \ref{prop:primimprim} and Tables \ref{tab:compexcep} and \ref{tab:excep}.

For a given irreducible Weyl group $W$, one can ask if there exists a symplectic reflection group whose Namikawa--Weyl group is $W$.
Our classification shows that the answer is yes only for Weyl groups of simply laced type, of type \(B_2\), and of type \(G_2\).
The Weyl groups \(W\) of type \(F_4\) and \(B_m\) for \(m\geqslant 3\) are only realizable as a Namikawa--Weyl group as \(W \times A_1\).

Finally, we note that the Namikawa--Weyl group can be realised as a group of exotic automorphisms of the generalized Calogero--Moser space associated to $G$. See Section~\ref{sec:CM} for details.

\subsection*{Acknowledgements} 

The first author was supported by EPSRC grants UKRI2779 and EP-W013053-1.

\section{The Namikawa--Weyl group}
\label{sec:namikawaweyl}

Throughout, let \(V\) be a finite\-/dimensional complex vector space endowed with a symplectic form and let \(G\leqslant \Sp(V)\) be a symplectic reflection group.

\subsection{Minimal parabolic subgroups}
Bellamy \cite{Bel16} gives a description of the Namikawa--Weyl group of the singularity \(V/G\) via the minimal parabolic subgroups of \(G\). The details of this construction are essential for what follows, so we repeat them here.

\begin{definition}
  A subgroup \(P \leqslant G\) is called \emph{parabolic} if \(P\) is the stabilizer of some vector \(v\in V\).
  A parabolic subgroup \(P\leqslant G\) is called \emph{minimal} if \(\dim(V) - \dim(V^P) = 2\).
  We write \(\mathcal P(G)\) for the set of \(G\)\=/conjugacy classes of minimal parabolic subgroups of \(G\). Since $G$ is assumed to be a symplectic reflection group, it is generated by its minimal parabolic subgroups. 
\end{definition}

For every conjugacy class \(C\in\mathcal P(G)\), we fix a representative \(P\in C\). 
Then, viewed as a representation of \(P\), \(V\) decomposes as $V = V_P \oplus V^P$, where $V_P \coloneqq (V^P)^{\perp}$ is the symplectic complement to $V^P$. Since $P$ is a minimal parabolic, the subspace $V_P$ is a two-dimensional (faithful) representation and hence \(P\) is isomorphic to a finite subgroup of \(\SL_2(\CC) \cong \Sp(V_P)\).
Then the McKay correspondence \cite{McK80,GV83} associates to \(P\) a Weyl group \(W_P\) with real reflection representation \(\mathfrak h_P\).
There is a natural action of the quotient \(\Xi_P = N_G(P) / P\) on \(\mathfrak h_P\) given by a morphism \(\zeta: \Xi_P \to \GL(\mathfrak h_P)\), see below for details.
Note that \(W_P\), \(\mathfrak h_P\) and \(\Xi_P\) only depend on the conjugacy class \(C\) (up to isomorphism), not on the choice of \(P\). Therefore, we write \(W_C\), \(\mathfrak h_C\) and \(\Xi_C\) in the following.
Let \(\mathfrak h_C^{\Xi_C}\) be the fixed space and let \[W_C^{\Xi_C} = \{w \in W_C \mid \zeta(x) w \zeta(x)^{-1} = w\text{ for all }x\in \Xi_C\}\] be the subgroup of elements fixed by $\Xi_C$.

\begin{theorem}[{\cite[Thm.~1.3]{Bel16}}]
  \label{thm:directproduct}
  With the above notation, the Namikawa--Weyl group associated to \(V/G\) is given by the direct product \[\mathcal W_G = \prod_{C\in\mathcal P(G)} W_C^{\Xi_C}\] acting on the space \(\Pic(X)\otimes_\ZZ\RR \cong \bigoplus_{C\in\mathcal P(G)} \mathfrak h_C^{\Xi_C}\) by reflections.
\end{theorem}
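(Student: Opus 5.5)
The plan is to reduce to Namikawa's general description of the Namikawa--Weyl group of a conic symplectic singularity \(Y\) in terms of its codimension-two symplectic leaves. Namikawa shows (\cite{Nam10, Nam15}) that for each codimension-two leaf \(B\subset Y\), a transversal slice to \(B\) is a Kleinian singularity \(\CC^2/\Gamma_B\). Its minimal resolution has exceptional curves forming an ADE Dynkin diagram, with Weyl group \(W_B\) and real Cartan \(\mathfrak h_B\). The fundamental group \(\pi_1(B)\) acts on this diagram by monodromy, hence on \(\mathfrak h_B\) and on \(W_B\). Namikawa's result is that \(\mathcal W_Y=\prod_B W_B^{\pi_1(B)}\), and that \(\Pic(X)\otimes_\ZZ\RR\cong H^2(X,\RR)\) decomposes as \(H^2(Y_{\mathrm{reg}},\RR)\oplus\bigoplus_B \mathfrak h_B^{\pi_1(B)}\). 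The proof of Theorem~\ref{thm:directproduct} therefore consists of making each ingredient explicit for \(Y=V/G\).

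First, identify the leaves. The symplectic leaves of \(V/G\) are the images of the strata \(V_P^{\circ}=\{v\in V\mid G_v=P\}\), as \(P\) runs over conjugacy classes of parabolic subgroups. The leaf has dimension \(\dim V^P\), so the codimension-two leaves correspond exactly to \(\mathcal P(G)\). At a point \(v\) with \(G_v=P\), the Luna slice theorem identifies the formal neighbourhood of the image of \(v\) with that of \((V_P/P)\times V^P\). Hence the transversal slice is \(V_P/P\cong\CC^2/P\), and McKay gives \(W_B=W_P\) and \(\mathfrak h_B=\mathfrak h_P\). Also \(H^2((V/G)_{\mathrm{reg}},\RR)=0\): the regular locus is the image of an open subset of \(V\) whose complement has codimension at least two, and it is a quotient by a finite group, so its rational cohomology is the \(G\)-invariant part of the cohomology of that subset, which vanishes in degree two. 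Therefore only the \(\mathfrak h_B^{\pi_1(B)}\) summands contribute.

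Next, compute the monodromy. The leaf is \(B=V_P^{\circ}/\Xi_P\), and \(\Xi_P=N_G(P)/P\) acts freely on \(V_P^{\circ}\). The open set \(V_P^{\circ}\) is the complement in \(V^P\) of the subspaces \(V^Q\) with \(Q\supsetneq P\). These are proper subspaces of the symplectic space \(V^P\), so they have complex codimension at least two, and \(V_P^{\circ}\) is simply connected. Hence \(\pi_1(B)\cong\Xi_P\). One must then check that the monodromy action of a loop coincides with \(\zeta\). For this, lift the loop to a path from \(v\) to \(nv\) with \(n\in N_G(P)\). The induced identification of slices is \(\CC^2/P\to\CC^2/P\) induced by conjugation by \(n\) on \(V_P\). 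This automorphism lifts to the minimal resolution and permutes the exceptional curves, which on the McKay side means permuting the nontrivial irreducible representations of \(P\) by twisting with \(n\). That permutation is exactly \(\zeta\).

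The main obstacle is this monodromy identification, in particular checking that the action on \(H^2\) of the resolution matches the action on irreducible representations of \(P\). I would handle it via the equivariant McKay correspondence: exceptional curves correspond to tautological bundles indexed by \(\Irr(P)\), and \(n\) transports these by \(\rho\mapsto\rho\circ\mathrm{Ad}(n^{-1})\). Finally, \(W_P^{\Xi_P}\) acts on \(\mathfrak h_P^{\Xi_P}\) as a reflection group. The group \(\zeta(\Xi_P)\) acts by diagram automorphisms, and by Steinberg's folding the centralizer of a group of diagram automorphisms in \(W_P\) is the Weyl group of the folded root system on the fixed space. This yields the stated product decomposition.
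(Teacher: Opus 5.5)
Your proposal is correct and takes essentially the same route: the paper gives no argument of its own and simply quotes \cite[Thm.~1.3]{Bel16}, which amounts to specialising Namikawa's leaf-by-leaf description $\mathcal W_Y=\prod_B W_B^{\pi_1(B)}$ to $V/G$. Your steps supply exactly the ingredients that specialisation needs: codimension-two leaves correspond to $\mathcal P(G)$, the transversal slices are $\CC^2/P$, $\pi_1(B)\cong\Xi_P$ because $V_P^{\circ}$ is simply connected, the monodromy is the twisting action of $\Xi_P$ on $\Irr(P)$, and Steinberg folding identifies $W_P^{\Xi_P}$ as a reflection group on $\mathfrak h_P^{\Xi_P}$.
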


\subsection{McKay correspondence}
To construct the Namikawa--Weyl group for a given quotient singularity \(V/G\) one therefore needs to determine the fixed groups \((W_C^{\Xi_C}, \mathfrak h_C^{\Xi_C})\) for each conjugacy class \(C\in \mathcal P(G)\) of minimal parabolic subgroups.
Let \(C\in \mathcal P(G)\) and let \(P\in C\).
We describe the action of \(\Xi_C \cong N_G(P)/P\) on \(\mathfrak h_C\) following \cite[Sect.~2.1]{Bel16}.
Let \(\Irr(P)\) be the set of isomorphism classes of \emph{non\-/trivial} irreducible complex \(P\)\=/modules.
For \(x\in \Xi_C\) and \(\lambda\in \Irr(P)\), we let \({}^x\lambda\) be the element of \(\Irr(P)\) that equals \(\lambda\) as a vector space and on which \(g\in P\) acts via \(g\cdot v = \tilde{x}^{-1} g \tilde{x} \cdot v\) for all \(v\in {}^x\lambda\) and some choice of lift $\tilde{x} \in N_G(P)$ of $x \in \Xi_C$. Up to isomorphism, ${}^x \lambda$ does not depend on the choice of lift. 

The McKay correspondence associates to \(P\) a Dynkin diagram \(\Delta_C\) as follows.
The vertices of \(\Delta_C\) are the elements of \(\Irr(P)\) and two vertices \(\lambda,\lambda'\in \Irr(P)\) are connected by an edge if \(\Hom_P(V_P\otimes_\CC \lambda, \lambda') \neq 0\).
By construction, the action of \(\Xi_C\) on \(\Irr(P)\) induces a diagram automorphism of \(\Delta_C\).

Because \(N_G(P)\) normalizes \(P\), the decomposition \(V = V^P \oplus (V^P)^\perp\) is a decomposition of \(N_G(P)\)-modules. This gives rise to a homomorphism $N_G(P) \to \Sp(V_P) = \SL_2(\CC)$.
For describing the action of \(\Xi_C\) on \(\mathfrak h_C\), only the image of \(N_G(P)\) in \(\SL_2(\CC)\) is relevant. This implies that all possible groups \(W_C^{\Xi_C}\) that can occur in Theorem~\ref{thm:directproduct} must be obtained by first listing all pairs \((H, N)\) with \(H \trianglelefteqslant N \leqslant \SL_2(\CC)\) finite groups and computing the corresponding Weyl group \(W_{H}^{N/H}\).

\begin{table}[h]
  \caption{Kleinian groups \(H\trianglelefteqslant N\leqslant\SL_2(\CC)\), \(H\neq N\), with the Weyl groups \((W_H, \mathfrak h_H)\) and \((W_H^{N/H}, \mathfrak h_H^{N/H})\).}
  \label{tab:kleinianpairs}
  \begin{tabular}{l|l|l|l}
  \(H\)                         & \(N\)       & \(W_H\) & \(W_H^{N/H}\)             \\
  \hline
  \(\mathsf C_2\)               & \(\mathsf C_{2r}\), \(\mathsf D_r\), \(\mathsf T\), \(\mathsf O\), \(\mathsf I\), \((r\geqslant 2)\) & \(A_1\)         & \(A_1\)                                     \\
  \(\mathsf C_n\) (\(n\geqslant 2)\) & \(\mathsf C_{nr}\) (\(r\geqslant 2\))                                                                & \(A_{n - 1}\)   & \(A_{n - 1}\)                               \\
  \(\mathsf C_n\) (\(n\geqslant 2)\) & \(\mathsf D_{r}\) (\(n \mid 2r\))                                                                & \(A_{n - 1}\)   & \(B_m\), \(m = \lfloor \frac{n}{2}\rfloor\) \\
  \(\mathsf D_n\) (\(n\geqslant 2)\) & \(\mathsf D_{2n}\)                                                                              & \(D_{n + 2}\)   & \(B_{n + 1}\)                               \\
  \(\mathsf D_2\)               & \(\mathsf T\), \(\mathsf O\)                                                                    & \(D_4\)         & \(G_2\)                                     \\
  \(\mathsf T\)                 & \(\mathsf O\)                                                                                   & \(E_6\)         & \(F_4\)
  \end{tabular}
\end{table}

\begin{proposition}
  \label{prop:SL2pairs}
  Let \(H, N\leqslant\SL_2(\CC)\) be finite non-trivial groups with \(H\trianglelefteqslant N\) and \(H\neq N\).
  Let \((W_H, \mathfrak h_H)\) be the Weyl group associated to \(H\) via the McKay correspondence.
  Table~\ref{tab:kleinianpairs} lists all possible pairs \((H, N)\) together with the Weyl group \((W_H^{N/H}, \mathfrak h_H^{N/H})\).
\end{proposition}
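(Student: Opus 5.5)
The proof has two parts. First we classify the pairs \(H \trianglelefteqslant N\) of finite subgroups of \(\SL_2(\CC)\). Then, for each pair, we compute the diagram automorphism that \(N/H\) induces on \(\Delta_H\) and take the fixed points.

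\textbf{Step 1: the pairs.} Every finite subgroup of \(\SL_2(\CC)\) is conjugate to one of \(\mathsf C_n\), \(\mathsf D_n\) (binary dihedral of order \(4n\)), \(\mathsf T\), \(\mathsf O\), \(\mathsf I\). We list the normal subgroups of each.
\begin{itemize}
\item In cyclic \(\mathsf C_m\), every subgroup is normal, which gives the pairs \((\mathsf C_n,\mathsf C_{nr})\).
\item In \(\mathsf D_r\), the normal subgroups are the subgroups of the cyclic index-two subgroup \(\mathsf C_{2r}\), that is \(\mathsf C_n\) with \(n\mid 2r\), together with \(\mathsf D_{r/2}\) when \(r\) is even (index two). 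The index-four "dihedral" normal subgroups exist only for \(\mathsf D_2\) and are cyclic of order 4.
\item The normal subgroups of \(\mathsf T\) are \(\mathsf C_2\) and \(\mathsf D_2\) (\(Q_8\)).
\item Those of \(\mathsf O\) are \(\mathsf C_2\), \(\mathsf D_2\) and \(\mathsf T\).
\item Those of \(\mathsf I\) are \(\mathsf C_2\) only.
\end{itemize}
Since \(\mathsf C_2=\{\pm1\}\) is central in every group of even order, this produces exactly the rows of Table~\ref{tab:kleinianpairs}. Care is needed with small coincidences such as \(\mathsf D_1\cong\mathsf C_4\); we use the convention \(n\geqslant2\) for \(\mathsf D_n\).

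\textbf{Step 2: the action of \(N/H\) on \(\Irr(H)\).} Conjugation by \(N\) permutes the characters of \(H\), and this induces an automorphism of the McKay graph preserving the trivial vertex. Hence we get an automorphism of the finite Dynkin diagram \(\Delta_H\).
\begin{itemize}
\item For \(H\) cyclic inside \(N\) cyclic, the action is trivial, so the fixed group is all of \(A_{n-1}\).
\item For \(H=\mathsf C_n\lhd \mathsf D_r\), the element \(j\) acts by inversion \(\chi\mapsto\chi^{-1}\). This is the nontrivial involution of \(A_{n-1}\), and its fixed Weyl group is of type \(B_{\lfloor n/2\rfloor}\) (type \(C\) in the other normalisation, which is the same abstract reflection group). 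For \(n=2\) the involution is trivial, so the table entry \(B_1=A_1\) is consistent.
\item For \(\mathsf D_n\lhd\mathsf D_{2n}\), the outer element swaps the two characters of \(\mathsf D_n\) on which the cyclic part acts by \(-1\) in a certain way. Concretely, it swaps the two end nodes at one fork of \(D_{n+2}\) and fixes the others, giving \(B_{n+1}\). We must check which pair is swapped, namely the characters differing by the sign character trivial on \(\mathsf D_{n}\cap\mathsf C_{2n}\)'s index-2 part.
\item For \(\mathsf D_2\lhd\mathsf T\), the cyclic group \(\mathsf T/\mathsf D_2\cong\ZZ/3\) permutes the three nontrivial linear characters of \(Q_8\) cyclically, giving triality on \(D_4\) and fixed group \(G_2\). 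For \(\mathsf O\), the quotient \(S_3\) acts by full \(S_3\) on the three legs, and the fixed group is again \(G_2\).
\item For \(\mathsf T\lhd\mathsf O\), the outer automorphism swaps the two nontrivial linear characters and the two non-self-conjugate 2-dimensional characters. This is the involution of \(E_6\), giving \(F_4\).
\end{itemize}

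\textbf{Step 3: the fixed subgroup.} We use the standard fact that for a group \(\Gamma\) of diagram automorphisms of a simply-laced Dynkin diagram, \(W^{\Gamma}\) is the Weyl group of the folded root system and acts on \(\mathfrak h^\Gamma\) as a reflection group (Steinberg). Its generators are the longest elements of the parabolic subgroups on the \(\Gamma\)-orbits of nodes. Folding \(A_{2m}\) or \(A_{2m+1}\) gives \(B_m\)/\(C_m\) type, \(D_{n+2}\) with a two-fold swap gives \(B_{n+1}\), \(D_4\) with triality gives \(G_2\), and \(E_6\) gives \(F_4\).

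The main obstacle is Step 2. We have to verify carefully that the conjugation action is the claimed nontrivial automorphism, and in the \(\mathsf D_n\lhd\mathsf D_{2n}\) case identify which nodes are swapped. We would do this by explicit character computations on generators. We also have to confirm that Step 1's list of normal subgroups is complete in the small cases \(\mathsf D_2\), \(\mathsf D_4\) and \(\mathsf T\).
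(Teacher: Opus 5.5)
Your proposal is correct and follows the same route as the paper. The paper cites a reference for the classification of normal pairs in \(\SL_2(\CC)\) and then argues case by case. It uses inversion \(\chi_k\mapsto\chi_{n-k}\) for \(\mathsf C_n\trianglelefteqslant\mathsf D_r\), exactly as you do, and Steinberg for the fixed reflection group. Your sketches of the \(\mathsf D_n\trianglelefteqslant\mathsf D_{2n}\), \(\mathsf D_2\trianglelefteqslant\mathsf T,\mathsf O\) and \(\mathsf T\trianglelefteqslant\mathsf O\) cases supply the details the paper omits.

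Two small slips to fix. Folding \(A_{2m+1}\) gives rank \(m+1\), not \(m\); this is consistent with your correct \(B_{\lfloor n/2\rfloor}\) in Step 2. Also, the \(\mathsf C_4\) subgroups of \(\mathsf D_2\) have index two, not four.
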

\begin{proof}
  The pairs \((H, N)\) of finite groups \(H, N\leqslant\SL_2(\CC)\) with \(H\trianglelefteqslant N\) are well-known; see for example \cite[p.~56]{Val64}.

  Determining the groups \(W_H^{N/H}\) is now a case-by-case analysis.
  For \(H = \mathsf C_2\), the action of $N/H$ on $W_H = \mathfrak{S}_2$ is necessarily trivial since the Dynkin diagram is $A_1$, with one vertex and no edges, so every diagram automorphism is trivial. Hence $W_H^{N/H} = W_H = \mathfrak{S}_2$. Likewise, if \(H\) and \(N\) are both cyclic then the conjugation action of $N$ on $H$ is trivial.  This implies that the action of \(N/H\) on \(\mathfrak h_H\) is also trivial.

  Let \(H = \mathsf C_n\) with \(n\geqslant 2\) and \(N = \mathsf D_r\) for some \(r\) with \(n\mid 2r\).
  Let \(h\in H\) be a generator of \(H\) and let \(\zeta_n\in \CC\) be a primitive \(n\)\=/th root of unity.
  Then the Dynkin diagram \(\Delta_H\) associated to \(H\) via the McKay correspondence is of type \(A_{n - 1}\), where the vertices can be labelled by the \(n - 1\) characters \(\chi_k\) defined by \(\chi_k(h) = \zeta_n^k\), with \(k = 1,\dots, n - 1\).
  The vertex \(\chi_k\) is linked to the vertex \(\chi_l\) in \(\Delta_H\) if and only if \(|{l - k}| = 1\).
  Inside \(N\), the element \(h\) is conjugate to \(h^{-1}\).
  Hence the induced action of \(N/H\) on \(\Delta_H\) factors through \(\ZZ/2\ZZ\), where the non-trivial element acts by sending \(\chi_i\) to \(\chi_{n - i}\) for \(1\leqslant i \leqslant m\) with \(m = \lfloor\frac{n}{2}\rfloor\).
  The resulting fixed group \(W_H^{N/H}\) is then again a reflection group \cite[Thm.~32]{Ste68}, \cite{GI14} and of type \(B_m\), see \cite[p.~175, Ex.~(a)]{Ste68}.

  The remaining cases use the same routine arguments, so we omit the details.
\end{proof}

\begin{remark}
	We will abuse notation and often write the Coxeter label of the irreducible factors of $\mathcal{W}_G$ to describe the group. For instance, if $\mathcal{W}_G = \mathfrak{S}_5 \times \mathfrak{S}_7$ then we write simply $\mathcal{W}_G = A_4 \times A_6$. 
\end{remark}

\subsection{Calogero--Moser spaces}\label{sec:CM}

Let $\mathfrak{c}_{\CC} = \mathfrak{c} \otimes_{\RR} \CC$. Etingof and Ginzburg \cite{EG} associate to any symplectic reflection group the \emph{symplectic reflection algebra} $\SRA$ (at $t = 0$), which is a $\CC[\mathfrak{c}_{\CC}]$\=/algebra. For any point $c \in \mathfrak{c}_{\CC}$ one can specialize $\SRA$ to $\SRA_c$. The algebra $\SRA$ is a finite module over its centre and the \emph{generalised Calogero--Moser space} is defined to be $\mathrm{CM}(G) = \mathrm{Spec} \, Z(\SRA)$. This space comes with a morphism to $\mathfrak{c}_{\CC}$ such that the fibre over $c \in \mathfrak{c}_{\CC}$ can be shown to be isomorphic to $\mathrm{CM}_c(G) = \mathrm{Spec} \, Z(\SRA_c)$ using the Satake isomorphism \cite[Thm.~3.1]{EG}. 

The name comes from the fact that when $G = \mathfrak{S}_n$, it was shown in \cite[Thm.~1.13]{EG} that $\mathrm{CM}_c(G)$, for $c \neq 0$, is isomorphic to Wilson's compactified Ca\-lo\-ge\-ro--Moser space \cite{Wilson}. Then \cite[Thm.~1.4]{Bel16} says that:

\begin{proposition}
	The action of $\mathcal{W}_G$ on $\mathfrak{c}_{\CC}$ lifts to an action on $\mathrm{CM}(G)$ by Poisson automorphisms. In particular, if $c \in \mathfrak{c}_{\CC}$ and $w \in \mathcal{W}_G$ then there is a Poisson isomorphism $w \colon \mathrm{CM}_c(G) \to \mathrm{CM}_{w(c)}(G)$. 
\end{proposition}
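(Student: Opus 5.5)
The statement is cited from \cite[Thm.~1.4]{Bel16}. The plan is to build the action of \(\mathcal W_G\) on the family \(\mathrm{CM}(G)\to\mathfrak c_\CC\) out of symmetries of the symplectic reflection algebra \(\SRA\) itself, working one factor at a time. Two facts should be available from Bellamy's argument for Theorem~\ref{thm:directproduct}: \(\mathfrak c_\CC\) is identified with the space of class functions on symplectic reflections, and this identification is compatible with \(\bigoplus_{C\in\mathcal P(G)}\mathfrak h_C^{\Xi_C}\otimes\CC\). Granting these, it suffices to do the following for each \(C\) and each simple reflection \(s\) of \(W_C^{\Xi_C}\): produce a \(\CC\)-algebra automorphism of the centre \(Z(\SRA)\) that covers the affine-linear map \(s\) on \(\mathfrak c_\CC\) and preserves the Poisson bracket. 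We then check that these automorphisms satisfy the Coxeter relations. The Poisson structure on \(Z(\SRA)\) comes from the one-parameter deformation in \(t\), so any automorphism of the \(t\)-deformed algebra that is trivial on \(t\) automatically induces Poisson automorphisms at \(t=0\).

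First, local step: for a Kleinian group \(P\leqslant\SL_2(\CC)\), the \(W_P\)-action on \(\mathrm{CM}(P)\) is classical. The centre \(Z(\SRA_c(P))\) is the coordinate ring of the deformed Kleinian singularity. Its parameter space \(\mathfrak h_P\otimes\CC\) carries the Weyl group action coming from the Brieskorn–Slodowy/Kronheimer construction. Equivalently, after passing to the spherical subalgebra \(e\SRA e\), the reflections are realised by Morita/shift functors, or by the Lusztig–Crawley-Boevey–Holland reflection functors for deformed preprojective algebras. The \(\Xi_C\)-equivariance of these constructions shows that the subgroup \(W_C^{\Xi_C}\) acts compatibly with the \(N_G(P)\)-action.

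Second, the local-to-global step, which is the main obstacle. Because \(\mathrm{CM}(G)\) is normal, I would define the lift on a big open subset and extend it. The locus of \(\mathrm{CM}(G)\) lying over points of \(V/G\) whose stabiliser is trivial or a minimal parabolic has complement of codimension at least \(2\). On this locus, the Bezrukavnikov–Etingof completion isomorphism identifies \(\SRA\) completed at a point \(b\) with stabiliser \(P\) with a matrix algebra over \(\SRA(P)\widehat{\otimes}\mathcal{O}(\widehat{V^P})\). Under this identification, the parameter \(c\) restricts to \(c|_P\). The \(N_G(P)\)-invariance of \(w\in W_C^{\Xi_C}\) should make the local lifts glue: the lifts near different points of the same stratum differ by \(N_G(P)\), so the invariant ones patch together. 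Near strata of other classes \(C'\neq C\), the element \(w\) acts trivially on the relevant parameters, so the lift there is the identity. The genuinely hard part is making this gluing canonical enough that the Poisson brackets and the algebra structure match on overlaps. I would handle this by working on the Poisson variety over the formal neighbourhood of each stratum and invoking Hartogs extension for functions on the normal variety \(\mathrm{CM}(G)\).

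Finally, once each factor acts, the factors commute because their supports, the strata of different \(C\), are disjoint. Applying this fibrewise gives the stated Poisson isomorphisms \(\mathrm{CM}_c(G)\to\mathrm{CM}_{w(c)}(G)\), which are obtained by restricting the global automorphism over \(c\).
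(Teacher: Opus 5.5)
Your Kleinian step is fine. The local-to-global step does not go through, and that is where all the content of the statement lies.

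\textbf{Hartogs extension has nothing to act on.} Extension on the normal variety $\mathrm{CM}(G)$ needs a morphism on an honest Zariski-open $U$ whose complement has codimension $\geqslant 2$. What you have instead are isomorphisms of formal completions (Bezrukavnikov--Etingof/Losev type). These depend on choices and do not overlap, so there is nothing to glue and nothing to extend.

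\textbf{The lift on the dense locus is undefined.} Where the local stabiliser is trivial, or of a class $C'\neq C$, you set the lift equal to \enquote{the identity}. But a lift of $w\neq 1$ must carry the fibre $\mathrm{CM}_c(G)$ onto the different fibre $\mathrm{CM}_{w(c)}(G)$. These fibres are not canonically identified; local trivialisations of the family there are only defined up to Poisson automorphisms. So \enquote{the identity} is not a map. For the same reason, a lift of $w$ moves every point not lying over the reflecting hyperplane of $w$, so it has no support. Your claim that different factors commute because their supports are disjoint therefore has no content, and the Coxeter relations are never actually checked.

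\textbf{A smaller issue.} For $c\neq 0$ there is no natural morphism $\mathrm{CM}_c(G)\to V/G$, and the elements of $V$ do not commute in $\SRA$. So \enquote{the locus lying over points of $V/G$ with given stabiliser} and completions \enquote{at $b$} would have to be replaced by completions at points of $\Spec Z(\SRA)$, together with a description of its symplectic leaves.

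The missing idea is global. The paper gives no argument of its own; it quotes \cite[Thm.~1.4]{Bel16}, where the action comes from Namikawa's universal Poisson deformation rather than from patching local data. For a $\QQ$-factorial terminalization $X\to V/G$, let $\mathcal Y\to H^2(X,\CC)$ be the affinisation of the universal Poisson deformation of $X$. It satisfies $\mathcal Y\cong \mathcal Y_{\mathrm{univ}}\times_{H^2(X,\CC)/\mathcal W_G} H^2(X,\CC)$, where $\mathcal Y_{\mathrm{univ}}$ is the universal graded Poisson deformation of $V/G$ \cite{Nam10}. Hence $\mathcal W_G$ acts on $\mathcal Y$ through the second factor, by Poisson automorphisms covering its action on the base, and the group relations are automatic. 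Bellamy compares the Calogero--Moser family $\mathrm{CM}(G)\to\mathfrak c_\CC$ with $\mathcal Y\to H^2(X,\CC)$, compatibly with a linear identification $\mathfrak c_\CC\cong H^2(X,\CC)$, and transports the action. The isomorphisms $\mathrm{CM}_c(G)\cong\mathrm{CM}_{w(c)}(G)$ are thus produced by universality, which is exactly what your gluing of formal Kleinian models cannot supply.
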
 

\section{The classification}

Let \(G\leqslant\Sp(V)\) be a symplectic reflection group. We wish to determine the Namikawa--Weyl group \(\mathcal W_G\) associated to the linear quotient \(V/G\). By Section~\ref{sec:namikawaweyl}, this means that we need to determine the set \(\mathcal P(G)\) of conjugacy classes of minimal parabolic subgroups of \(G\) and for every such subgroup \(P\leqslant G\) the normalizer \(N_G(P)\). Finally, we need to restrict the groups \(P\) and \(N_G(P)\) to \(\Sp(V_P) \) and read off the contribution of \(P\) to \(\mathcal W_G\) from Table~\ref{tab:kleinianpairs}.

In this section, we explicitly describe the Namikawa--Weyl group for any given symplectic reflection group \(G\).
For this, we first reduce to the irreducible case.
The group \(G\) is called \emph{symplectically irreducible} if there is no decomposition \(V = V_1\oplus V_2\) with symplectic subspaces \(0\neq V_i\leqslant V\) left invariant by \(G\). If $G$ is a symplectic reflection group then it is easily seen that the action is \emph{symplectically completely reducible} in the sense that if $V_1 \leqslant V$ is a symplectic subspace left invariant under $G$ then $V = V_1 \oplus V_2$ as a $G$-module, where $V_2 = V_1^{\perp}$, and $G = G_1 \times G_2$, where $G_i$ is the image of $G$ in $\Sp(V_i)$. 

\subsection{Reduction to the irreducible case}
Assume that \(G\) is not symplectically irreducible. That is, we can write \(G = G_1\times G_2\) with \(G_i\leqslant\Sp(V_i)\) for symplectic subspaces \(0\neq V_i\leqslant V\) with \(V = V_1 \oplus V_2\). If \(P\leqslant G\) is a minimal parabolic subgroup, then by minimality \(V_P \leqslant V_i\) for exactly one \(i\in \{1, 2\}\). Without loss of generality, we assume \(i = 1\).
Hence by restricting to \(V_1\) we may identify \(P\) with a minimal parabolic subgroup of \(G_1\) and $N_G(P) = N_{G_1}(P) \times G_2$, where the morphism $N_G(P) \to \Sp(V_P)$ factors through $N_{G_1}(P)$. This implies that 
\[
(\mathcal{W}_G,\mathfrak{c}) = (\mathcal{W}_{G_1} \times \mathcal{W}_{G_2},\mathfrak{c}_1 \oplus \mathfrak{c}_2).
\]
Therefore, we may assume in the following that \(G\) is symplectically irreducible.

When \(\dim(V) = 2\), \(V/G\) is a Kleinian singularity and the Namikawa--Weyl group \(\mathcal W_G\) is immediately given by the McKay correspondence since $P = N_G(P) = G$ in this case. Therefore, we assume that \(\dim(V) \geqslant 4\).

\subsection{The classification of irreducible symplectic reflection groups}
The symplectically irreducible symplectic reflection groups were classified in the guise of quaternionic reflection groups by Cohen \cite{Coh80} with recent amendments by Taylor \cite{Tay25} and Waldron \cite{Wal25}.
As explained below, they naturally contain the irreducible complex reflection groups \cite{ST54}. We give a brief account of this classification now, sufficient for our classification problem; see the above references for more details.

Let \(G\leqslant\Sp(V)\) be a symplectically irreducible symplectic reflection group.
If $V$ is not irreducible as a complex representation of \(G\) then $V = W \oplus W^\ast$, where $W$ is a Lagrangian subspace of $V$ on which $G$ acts as an irreducible complex reflection group \(Q \leqslant \GL(W) \), and $G$ is the image of $Q$ under the diagonal embedding \(\GL(W) \to \GL(W \oplus W^\ast)\). We say that the action of $G$ is \emph{complex reducible} in this case. Thus, the classification of symplectically irreducible but complex reducible symplectic reflection groups is equivalent to the classification of irreducible complex reflection groups.

If \(G\leqslant\Sp(V)\) acts (complex) irreducibly on \(V\), then we call \(G\) \emph{symplectically imprimitive}, if there is a decomposition \(V = V_1\oplus \cdots \oplus V_k\) into symplectic subspaces \(0\neq V_i\leqslant V\) with $k \geqslant 2$ such that for every \(i\in \{1,\dots, k\}\) and every \(g\in G\) there is some \(j\in\{1,\dots, k\}\) with \(g.V_i = V_j\). Otherwise, we call \(G\) \emph{symplectically primitive}. Analogously, we call \(G\) \emph{complex imprimitive} if such a decomposition \(V = V_1\oplus \cdots \oplus V_k\) exists, where the subspaces \(V_i\) are not necessarily symplectic.
Clearly, symplectic imprimitivity implies complex imprimitivity, but a symplectically primitive group may still be complex imprimitive. Therefore, the classification of complex irreducible symplectic reflection groups is divided into three disjoint classes:
\begin{enumerate}
	\item[-] Symplectically imprimitive; 
	\item[-] Symplectically primitive but complex imprimitive; and
	\item[-] Complex primitive. 
\end{enumerate}
We consider each class separately. 

\begin{table}[h]
  \caption{Namikawa--Weyl groups for the exceptional irreducible complex reflection groups.}
  \label{tab:compexcep}
  \begin{tabular}{|l|c|}
    \(G\)      &  \(\mathcal W_G\) \\
    \hline
    \(G_4\)    & \(A_2\)   \\
    \(G_8\)    & \(A_3\)   \\
    \(G_{12}\) & \(A_1\)   \\
    \(G_{16}\) & \(A_4\)   \\
    \(G_{20}\) & \(A_2\)   \\
    \(G_{24}\) & \(A_1\)   \\
    \(G_{28}\) & \(A_1^2\) \\
    \(G_{32}\) & \(A_2\)   \\
    \(G_{36}\) & \(A_1\)
  \end{tabular}~
  \begin{tabular}{|l|c|}
    \(G\)      &  \(\mathcal W_G\) \\
    \hline
    \(G_5\)    & \(A_2^2\)                  \\
    \(G_9\)    & \(A_1\times A_3\) \\
    \(G_{13}\) & \(A_1^2\)                  \\
    \(G_{17}\) & \(A_1\times A_4\) \\
    \(G_{21}\) & \(A_1\times A_2\) \\
    \(G_{25}\) & \(A_2\)                    \\
    \(G_{29}\) & \(A_1\)                    \\
    \(G_{33}\) & \(A_1\)                    \\
    \(G_{37}\) & \(A_1\)
  \end{tabular}~
  \begin{tabular}{|l|c|}
    \(G\)      & \(\mathcal W_G\) \\
    \hline
    \(G_6\)    & \(A_1\times A_2\) \\
    \(G_{10}\) & \(A_2\times A_3\) \\
    \(G_{14}\) & \(A_1\times A_2\) \\
    \(G_{18}\) & \(A_2\times A_4\) \\
    \(G_{22}\) & \(A_1\)                    \\
    \(G_{26}\) & \(A_1\times A_2\) \\
    \(G_{30}\) & \(A_1\)                    \\
    \(G_{34}\) & \(A_1\)                    \\
               &
  \end{tabular}~
  \begin{tabular}{|l|c|}
    \(G\)      & \(\mathcal W_G\) \\
    \hline
    \(G_7\)    & \(A_1\times A_2^2\) \\
    \(G_{11}\) & \(A_1\times A_2\times A_3\) \\
    \(G_{15}\) & \(A_1^2\times A_2\) \\
    \(G_{19}\) & \(A_1\times A_2\times A_4\)\\
    \(G_{23}\) & \(A_1\) \\
    \(G_{27}\) & \(A_1\) \\
    \(G_{31}\) & \(A_1\) \\
    \(G_{35}\) & \(A_1\) \\
               &
  \end{tabular}
\end{table}

\subsection{Complex reducible groups}
Assume that \(G\) arises from a complex reflection group \(Q\) acting on a Lagrangian subspace \(W\leqslant V\).
Then minimal parabolic subgroups \(P\leqslant G\) can be identified with minimal parabolic subgroups of \(Q\) in the usual sense.
This implies that \(P\) is a cyclic group and its normalizer \(N_G(P)\) is its centralizer. In particular, the action of $\Xi_P = N_G(P) / P$ is trivial and hence \(\mathcal W_G\) is a product of symmetric groups; see \cite[Lem.~4.1]{BST18}.

If \(Q\) is an exceptional complex reflection group, then the precise description of \(\mathcal W_G\) can be found in \cite[Lem.~7.3]{BST18} and \cite[Tab.~1]{BST18} in most cases. For the remaining cases, one may find the conjugacy classes of parabolic subgroups in the tables in \cite[App.~C]{OT92}. We summarize these results in Table~\ref{tab:compexcep}.

\subsection{Symplectically imprimitive groups}
Let \(G\) be a symplectically imprimitive group. By \cite{Coh80}, the group \(G\) is conjugate to a normal subgroup of a wreath product \(K\wr \mathfrak S_n\), with \(n = \frac{1}{2}\dim(V)\) and \(K\leqslant \SL_2(\CC)\) a finite group.
When $n \geqslant 3$, all the symplectically imprimitive groups are parametrized by finite groups \(K, H\leqslant\SL_2(\CC)\) with \([K, K] \leqslant H\trianglelefteqslant K\) and written as \(G_n(K, H)\).
When \(n = 2\), there are additional groups \(G(K, H, \phi)\) with \(H\trianglelefteqslant K\) and \(\phi:K/H\to K/H\) an automorphism of order at most $2$.
See \cite{Coh80, Tay25, Wal25} for the definitions and more information on these groups.
The imprimitive complex reflection groups \(G(m, p, n)\) (in the notation of \cite{ST54}) can be included in this list as the groups \(G_n(\mathsf C_m, \mathsf C_{m/p})\).

\begin{table}[h]
  \caption{The number \(k\) of conjugacy classes of minimal parabolic subgroups \(P\) of \(G\) with \(P \cong \mathsf C_2\) acting by permutations on two 2\=/dimensional symplectic subspaces.}
  \label{tab:dim2a1}
  \def\arraystretch{1.2}
  \begin{tabular}{l|l|l}
    \(G\) & conditions & \(k\) \\
    \hline
    \(G_2(K, H)\) & \(K/H\) cyclic, \([K:H]\) even & 2 \\
    & \(K/H\) cyclic, \([K:H]\) odd & 1 \\
    \hline
    \(G_2(\mathsf D_{2d}, \mathsf C_{2d})\) & \(d\geqslant 1\) & 4 \\
    \hline
    \(G(\mathsf D_m, \mathsf C_l, \psi_r)\) & (A) and (B) & 4 \\
     & either (A) or (B) & 3 \\
     & neither (A) nor (B) & 2 \\
     \hline
    \(G(\mathsf T, \mathsf C_2, \rho(\gamma))\) && 1 \\
    \hline
    \(G(\mathsf O, \mathsf D_2, \id)\)& & 2  \\
    \hline
    \(G(\mathsf O, \mathsf C_2, \id)\) && 3 \\
    \hline
    \(G(\mathsf I, \mathsf C_2, \id)\) && 2 \\
    \hline
    \(G(\mathsf I, \mathsf C_2, \Theta)\) && 1 \\
    \hline
    \(G(\mathsf I, \{ 1 \}, \Theta)\) && 1
  \end{tabular}
  \vskip 1ex
  {\raggedright Notation:
  \begin{itemize}
    \item See \cite{Tay25} or \cite{RS26} for the definition of the maps \(\psi_r\), \(\rho(\gamma)\) and \(\Theta\).
    \item (A) is the condition \enquote{\(\frac{2m}{\gcd(2m/l, r - 1)}\) even and \(\gcd(2m/l, r - 1)\) even}.
    \item (B) is the condition \enquote{\(\frac{2m}{\gcd(2m/l, r + 1)}\) even and \(\gcd(2m/l, r + 1)\) even}.
  \end{itemize}}
\end{table}

\begin{proposition}
  \label{prop:imprim}
  Let \(G = G_n(K, H)\) or \(G = G(K, H, \phi)\).
  If \(H\neq \{1\}\), let \(W_H\) be the Weyl group corresponding to \(H\) and \(W_H^{K/H}\) the fixed group corresponding to the pair \((H, K)\) in Table~\ref{tab:kleinianpairs}.
  If \(H = \{1\}\), set \(W_H = \{1\}\).
  Then the Namikawa--Weyl group associated to \(G\) is given by
  \[
  \mathcal W_G = W_H^{K/H} \times A_1^k,
  \]
  where, for \(n \geqslant 3\), \(k = 1\) and otherwise the value of \(k\) is listed in Table~\ref{tab:dim2a1}.
\end{proposition}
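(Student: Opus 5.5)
We will use Theorem~\ref{thm:directproduct}. This reduces the problem to listing the conjugacy classes of minimal parabolic subgroups of \(G\), determining their normalizers, and reading off each contribution from Table~\ref{tab:kleinianpairs}. Realize \(G\) inside \(K\wr\mathfrak S_n\) acting on \(V = V_1\oplus\cdots\oplus V_n\) with \(\dim V_i = 2\). A symplectic reflection \(g\in G\) has \(\operatorname{rank}(1-g)=2\), so \(g\) is of one of two kinds. Either it is diagonal, acting nontrivially on exactly one \(V_i\) via an element of \(K\); or its image in \(\mathfrak S_n\) is a transposition \((i\,j)\), and it acts on \(V_i\oplus V_j\) by \((v,w)\mapsto (a w, a^{-1} v)\) for some \(a\in K\), fixing the other summands pointwise. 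Correspondingly, the minimal parabolic subgroups come in two types. A \emph{diagonal type} parabolic is the pointwise stabilizer in \(G\) of \(\bigoplus_{j\neq i}V_j\), and from the definition of \(G_n(K,H)\) (resp.\ \(G(K,H,\phi)\)) this subgroup equals \(H\) acting on \(V_i\). A \emph{transposition type} parabolic is generated by a single reflection of the second kind, and it is isomorphic to \(\mathsf C_2\).

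For the diagonal type, all the \(V_i\) are permuted transitively by \(G\). So when \(H\neq\{1\}\) there is exactly one conjugacy class, represented by \(P=H\) on \(V_1\). I will show that the image of \(N_G(P)\) in \(\Sp(V_1)\) is all of \(K\). The normalizer contains the elements of \(G\) acting diagonally with first coordinate an arbitrary \(k\in K\), completed on \(V_2\) by a suitable element so that the defining condition for membership in \(G\) holds (for \(G_n(K,H)\), the product of coordinates lies in \(H\) modulo \([K,K]\); for \(n=2\), the twisted condition given by \(\phi\)). Then Proposition~\ref{prop:SL2pairs} gives the factor \(W_H^{K/H}\). If \(K=H\), the factor is simply \(W_H\), and this case is consistent with the table convention. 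If \(H=\{1\}\), there is no such class, matching \(W_H=\{1\}\).

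For the transposition type, \(P\cong\mathsf C_2\), and by Table~\ref{tab:kleinianpairs} each class contributes \(A_1\) regardless of its normalizer. It remains to count the \(G\)-conjugacy classes of such reflections, which is \(k\). Conjugating by diagonal elements \((k_1,\dots,k_n)\in G\) changes the parameter \(a\) to \(k_i a k_j^{-1}\), and the permutation part moves \((i\,j)\) to any transposition. When \(n\geqslant 3\), the diagonal subgroup of \(G\) contains enough elements: for instance \((k,k^{-1},1,\dots)\) type adjustments using a third coordinate to absorb the determinant-type condition. This lets us reduce \(a\) to \(1\), so \(k=1\).

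The main obstacle is the case \(n=2\). Here no third coordinate is available, so the classes of \(a\) are the orbits of \(K\) under \(a\mapsto k_1 a k_2^{-1}\), with \((k_1,k_2)\) ranging over the diagonal part of \(G\), together with the swap, which acts by \(a\mapsto a^{-1}\). In addition, \(a\) must satisfy the condition that the reflection lies in \(G\). I would reduce this to double-coset and orbit counting in \(K/H\) twisted by \(\phi\). The case \(K/H\) cyclic gives \(2\) or \(1\) according to the parity of \([K:H]\), since squares versus non-squares in the cyclic quotient separate the orbits. The dihedral and exceptional cases (\(\psi_r\), \(\rho(\gamma)\), \(\Theta\)) require explicit computation, via the structural descriptions in \cite{Tay25, RS26}, to produce Table~\ref{tab:dim2a1}. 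Conditions (A) and (B) arise as the solvability conditions for the two possible orbit splittings.
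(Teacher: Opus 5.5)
Your proposal is correct and follows essentially the same route as the paper. The minimal parabolics split into a single class of copies of \(H\) on one \(V_i\), and a number of \(\mathsf C_2\) transposition-type classes. For the first class, the normalizer restricts to all of \(K\) on \(V_P\) via \(\mathrm{diag}(x,x^{-1},1,\dots,1)\), respectively \(\mathrm{diag}(x,y)\) with \(yH=\phi(xH)\). Each transposition-type class contributes \(A_1\), and the \(n=2\) counts are ultimately taken from the literature, just as the paper cites them rather than deriving them.

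Your direct counts for \(n\geqslant 3\) and for cyclic \(K/H\) are correct and slightly more self-contained than the paper's citation. One small fix: for \(n\geqslant 3\), the conjugator that reduces \(a\) to \(1\) should be \(\mathrm{diag}(1,a,a^{-1},1,\dots,1)\), not an element of the form \((k,k^{-1},1,\dots)\).
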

\begin{proof}
  The parabolic subgroups of \(G\) and their normalizers are listed in \cite[Sect.~4]{GRS25} and \cite[Sect.~3]{RS26}, with the case of complex reflection groups covered already in \cite[Sect.~6.1]{OT92} and \cite{MT18}.
  In all cases, there are two families of minimal parabolic subgroups \(P\leqslant G\).
  The first family consists of groups \(P\) that are isomorphic (as a reflection group) to \(H\) and act non\-/trivially only on a 2\=/dimensional symplectic subspace.
  The second family consists of groups isomorphic to \(\mathsf C_2\) that permute exactly two such subspaces.
  Assuming \(H\neq \{1\}\), there is one conjugacy class of minimal parabolic subgroups in the first family.
  For \(n \geqslant 3\), there is one class of parabolic subgroups in the second family, but when $n = 2$ (so that $\dim(V) = 4$) there are in general several distinct classes, as listed in Table~\ref{tab:dim2a1}.

  As noted in the proof of Proposition~\ref{prop:SL2pairs}, the action of $N_G(\mathsf C_2)$ on $\Irr(\mathsf C_2)$ is trivial. Therefore it remains to describe the normalizer \(N_G(P)\) of a parabolic subgroup \(P\) with \(P\) acting as \(H\) on a 2\=/dimensional symplectic subspace.
  Up to conjugation, the elements of \(P\) are given by the matrices \[\begin{psmallmatrix} h & \\ & I_{2n - 2} \end{psmallmatrix}\] with \(h\in H\).
  By construction of \(G_n(K, H)\), respectively \(G(K, H, \phi)\), we must have \(N_G(P)|_{V_P} \leqslant K\).
  Because \(H\) is normal in \(K\), the group \(N_G(P)\) contains the matrices \[\begin{psmallmatrix} x & & \\ & x^{-1} & \\ & & I_{2n - 4}\end{psmallmatrix}\]
  for all $x \in K$. Or, if \(G = G(K, H, \phi)\),  we choose $y \in K$ such that $y H = \phi(x H)$ so that \[\begin{psmallmatrix} x & \\ & y\end{psmallmatrix}\] belongs to $N_G(P)$ for all \(x\in K\). In both cases, we have \(N_G(P)|_{V_P} = K\).
\end{proof}

\subsection{Symplectically primitive, complex imprimitive groups}
The references \cite{Coh80, RS26} work with quaternionic reflection groups and refer to these groups as \enquote{primitive groups with imprimitive complexification}. There are several infinite families of these groups. We briefly recall the classification from \cite[Sect.~3]{Coh80}.
For this, we require the following notation.

\begin{notation}
  For a matrix \(g\in \GL_n(\CC)\), we write \[g^\circledast = \begin{pmatrix} g & \\ & (g^\top)^{-1}\end{pmatrix}\in \Sp_{2n}(\CC).\]
  If \(T\leqslant\GL_n(\CC)\) is a group, we similarly write \(T^\circledast = \langle g^\circledast\mid g\in T\rangle \leqslant \Sp_{2n}(\CC)\).
\end{notation}

Let \(G\leqslant\Sp(V)\) be a symplectically primitive, complex imprimitive group with \(\dim(V) > 2\).
By \cite[Thm.~3.6]{Coh80}, we have \(\dim(V) = 4\) and there is a group \(T \leqslant \U_2(\CC)\) such that \(G\) is conjugate to \[\langle T^\circledast, s\rangle,
\] where \[s = \begin{pmatrix} & & & 1\\ & & -1 & \\ & -1 & & \\ 1 & & & \end{pmatrix}.\]
For \(d\geqslant 1\), let \(\mu_d \leqslant \CC^\times\) be the group of all \(d\)-th roots of unity.
Then the group \(T\) is given as \(T = \mu_d T_0\), where \(T_0\leqslant \U_2(\CC)\) is a primitive complex reflection group and \(d\) is a multiple of \(|{Z(T_0)}|\).
By \cite[Lem.~3.3]{Coh80}, the group \(T_0\) can be one of the exceptional complex reflection groups \[G_5, G_7, G_8,\dots, G_{22}.\]
See \cite[Lem.~3.3]{Coh80} and \cite[Thm.~7.9]{Tay25} for the possible values of \(d\) for each given \(T_0\).

\begin{proposition}
  \label{prop:primimprim}
  Let \(G = \langle (\mu_d T_0)^\circledast, s\rangle\leqslant\Sp(V)\) be a symplectically primitive, complex imprimitive group.
  Then the Namikawa--Weyl group associated to \(G\) depends only on \(T_0\), but not on \(d\).
  The resulting groups are listed in Table~\ref{tab:primimprim}.
\end{proposition}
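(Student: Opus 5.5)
Via Theorem~\ref{thm:directproduct}, the task is to find the conjugacy classes of minimal parabolic subgroups of \(G = \langle T^\circledast, s\rangle\), with \(T = \mu_d T_0\) and \(V = \CC^2\oplus(\CC^2)^\ast\), compute their normalizers, and then read off the factors from Table~\ref{tab:kleinianpairs}.

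First I classify the symplectic reflections. Since \(s\) swaps the two Lagrangian summands \(W = \CC^2\) and \(W^\ast\), the group \(G\) contains \(G_0 = T^\circledast\) as a subgroup of index \(2\), and every element of \(G\setminus G_0\) has the form \(g^\circledast s\). An element \(g^\circledast \in G_0\) is a symplectic reflection exactly when \(g\) is a complex reflection of \(T\). Multiplying \(T_0\) by scalars \(\mu_d\) produces no new reflections beyond those of \(T_0\), except reflections \(\zeta r\) with \(\zeta r\) of rank-one defect. I would check that the reflections of \(\mu_d T_0\) are exactly the elements \(\zeta r\) of \(T\) having eigenvalue \(1\). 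These all lie in cosets of \(\mu_d\)-twisted reflections of \(T_0\), and their fixed hyperplanes are the reflecting hyperplanes of \(T_0\).

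For an element \(g^\circledast s\), I compute \(\ker(1 - g^\circledast s)\) directly, using that \((g^\circledast s)^2 = (g\,J(g^\top)^{-1}J^{-1})^\circledast\) for the appropriate \(J\). Here conjugation by \(s\) on the first block is \(g \mapsto \det(g)^{-1} g\) up to a fixed matrix, since \(J (g^\top)^{-1} J^{-1} = \det(g)^{-1} g\) for \(2\times 2\) matrices. From this I expect the symplectic reflections outside \(G_0\) to be the elements \(g^\circledast s\) with \(g\) satisfying a determinant/trace condition. They are involutions, so the corresponding minimal parabolics are \(\cong \mathsf C_2\) and contribute \(A_1\) factors with trivial \(\Xi\)-action.

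For the minimal parabolics \(P\) inside \(G_0\), each is the pointwise stabilizer of a reflecting hyperplane \(\ker(1 - g)\) of \(T\). Such a \(P\) is cyclic, and since a line stabilizer in \(\mu_d T_0\) only sees the reflections of \(T_0\), it coincides with the corresponding parabolic of \(T_0\); this is where independence of \(d\) comes from. Its normalizer in \(G\) is contained in the centralizer-type group from the complex reducible case (Section on complex reducible groups, \cite[Lem.~4.1]{BST18}) together with elements \(g^\circledast s\). The key point is whether some \(g^\circledast s\) normalizes \(P\) and acts on \(V_P\) by inverting \(P\). If it does, that row of Table~\ref{tab:kleinianpairs} (\(\mathsf C_n \trianglelefteq \mathsf D_r\)) turns \(A_{n-1}\) into \(B_{\lfloor n/2\rfloor}\); if not, we get \(A_{n-1}\) unchanged. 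I would also check whether \(s\) fuses \(T_0\)-conjugacy classes of hyperplanes: \(V_P\) meets \(W\) and \(W^\ast\) in lines, and \(s\) maps the reflection hyperplane of \(r\) to that of \(\det(r)^{-1}r\). The \(\mu_d\) scalars act trivially on \(\Irr(P)\) and cannot fuse classes, which again supports independence of \(d\).

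The main obstacle is this last step: determining, for each \(T_0 \in \{G_5, G_7, G_8, \dots, G_{22}\}\), the orbits of reflecting hyperplanes under \(G\) and the image of \(N_G(P)\) in \(\Sp(V_P)\), including the \(\mathsf C_2\)-classes coming from \(G \setminus G_0\). I would first work out the structural statement that the whole computation factors through \(\langle T_0^\circledast, s\rangle\) together with central scalars. Then I would do the finite check group by group, using the conjugacy classes of parabolic subgroups in \cite[App.~C]{OT92} and the computations in \cite{RS26}, possibly confirmed by computer, to fill in Table~\ref{tab:primimprim}.
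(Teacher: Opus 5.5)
\textbf{Gap: the proposal stops where the proof begins.} Your framework is right: Theorem~\ref{thm:directproduct} with Table~\ref{tab:kleinianpairs}, involutions \(g^\circledast s\) giving \(\mathsf C_2\)-classes and \(A_1\) factors, and cyclic \(P=Q^\circledast\) giving \(A_{n-1}\) or \(B_{\lfloor n/2\rfloor}\). But you call the determination of \(N_G(P)\) and its image in \(\Sp(V_P)\) \enquote{the main obstacle} and defer it to an unspecified group-by-group, possibly computer, check. So neither the entries of Table~\ref{tab:primimprim} nor the independence of \(d\) is proved.

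\textbf{The missing idea.} The second half of your \enquote{key point} is automatic. Your own identity \(J(g^\top)^{-1}J^{-1}=\det(g)^{-1}g\) (equivalently \(s'qs'=\det(q)^{-1}q\), \cite[Lem.~4.5]{RS26}) gives \((gs')q(gs')^{-1}=\det(q)^{-1}gqg^{-1}\) for a generator \(q\) of \(Q\). This element has determinant \(\det(q)^{-1}\). If \(|P|>2\) then \(\det(q)\notin\{\pm1\}\), so every element of \(N_G(P)\setminus T^\circledast\) acts non-trivially on \(P\). Table~\ref{tab:kleinianpairs} then forces the pair \((\mathsf C_k,\mathsf D_r)\) and a factor of type \(B\), with no separate check of \enquote{inversion} needed.

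What remains is only whether \(N_G(P)\) meets \(G\setminus T^\circledast\) at all, and the paper reads this off \cite[Prop.~4.12]{RS26}. It does, with \(N_G(P)=P\rtimes\langle C^\circledast,g^\circledast s\rangle\), except when \(P\cong\mathsf C_3\) and \(T_0\in\{G_5,G_7\}\). In that case \(N_G(P)=P\rtimes Z(T)^\circledast\), the two \(T_0\)-classes of such parabolics being fused in \(G\), and the contribution is \(A_2\).

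\textbf{Independence of \(d\) is also not established.} That \(\mu_d^\circledast\) centralizes \(T^\circledast\) only shows that the scalars themselves act trivially on \(\Irr(P)\). Changing \(d\) also changes the coset \(T^\circledast s\). With it, a priori, change the number of \(G\)-classes of reflections \(g^\circledast s\), the fusion of \(T_0\)-classes, and which elements lie in \(N_G(P)\setminus T^\circledast\). The paper takes the class data from \cite[Table~3]{RS26} and the normalizers from \cite[Prop.~4.12]{RS26}, both uniform in \(d\). The determinant argument above does not depend on which \(g\) occurs.

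Similarly, your claim that pointwise stabilizers in \(\mu_dT_0\) \enquote{only see the reflections of \(T_0\)} is not automatic. For instance, \(\mu_{12}G_5\) is conjugate to \(G_7\), which has reflections of order \(2\) not coming from \(G_5\). The claim holds only because of the restrictions on \(d\) in \cite[Lem.~3.3]{Coh80} and \cite[Thm.~7.9]{Tay25}. That is again information encoded in \cite[Table~3]{RS26}, and you need it before your case check can even start.
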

\begin{proof}
  Let \(G = \langle T^\circledast, s\rangle\) with \(T = \mu_d T_0\).
  Because \(\dim(V) = 4\), all proper non\-/trivial parabolic subgroups of \(G\) are minimal.
  Hence the conjugacy classes of minimal parabolic subgroups of \(G\) are listed in \cite[Table~3]{RS26}.
  In particular, \cite[Table~3]{RS26} implies that the minimal parabolic subgroups are cyclic and the number of conjugacy classes does not depend on \(d\).

  Let \(P\leqslant G\) be a minimal parabolic subgroup.
  Then \(P \cong \mathsf C_2\) or \(P = Q^\circledast\) for a (complex) parabolic subgroup \(Q\leqslant T_0\).
  In the first case, there is nothing further to compute. 
  In the second case, the normalizer \(N_G(P)\) of \(P\) is computed in \cite[Prop.~4.12]{RS26}.
  If \(P \cong \mathsf C_3\) and \(T_0\in \{G_5, G_7\}\), then \(N_G(P) = P\rtimes Z(T)^\circledast\).
  Then such a group \(P\) contributes a group of type \(A_2\) to the Namikawa--Weyl group.

  In the remaining cases, the proof of \cite[Prop.~4.12]{RS26} shows that \(N_G(P) = P\rtimes \langle C^\circledast, g^\circledast s\rangle\), where \(C\leqslant T\) is the complement of \(Q\) in \(N_T(Q)\) and \(g\in T\).
  To finish the proof it remains to show that \(g^\circledast s\) acts non-trivially on \(P\) because this implies that the image of the pair \((P, N_G(P))\) in $\Sp(V_P)$ is isomorphic to \((\mathsf C_k, \mathsf D_r)\) with \(k \mid 2r\) by Table~\ref{tab:kleinianpairs}.
  We may assume that \(|{P}| > 2\).

  To avoid excessive use of the \(\circledast\) operator, we consider \(G\) and \(P\) as quaternionic reflection groups with \(G\leqslant \GL_2(\HH)\) for the remainder of the proof.
  Let \(s' = \begin{psmallmatrix} 0 & -\mathbf j\\ \mathbf j & 0\end{psmallmatrix}\in \GL_2(\HH)\) with \(\mathbf j\in \HH^\times\) a root of \(-1\) such that \(\HH = \CC \oplus \CC\mathbf j\).
  The matrix \(s\) is the \enquote{complexification} of \(s'\) in the sense of \cite[p.~294]{Coh80} and so \(G\) is given by \(\langle T, s'\rangle\) as a quaternionic reflection group, where we consider \(T\) as a subgroup of \(\GL_2(\HH)\) in the natural way.
  Recall that \(P\), and hence \(Q\), is cyclic and let \(q\in Q\) be a generator.
  Because \(Q\) is a complex reflection group, \(q\) is a reflection and \(1 \neq \det(q)\in \CC^\times\) is a root of unity.
  Hence, by \cite[Lem.~4.5]{RS26}, we have \(s'q s' = \det(q)^{-1} q\).
  So we have \((gs') q (gs')^{-1} = \det(q)^{-1}(gqg^{-1})\).
  We conclude \(\det((gs')q(gs')^{-1}) = \det(q)^{-1}\) and we have \(\det(q)\notin\{\pm 1\}\) by the assumption \(|{P}| > 2\).
  Therefore \(q\neq(gs') q (gs')^{-1}\) and \(N_G(P)\) does not act as a cyclic group on \(V_P\).
  Finally, we see from Table~\ref{tab:kleinianpairs} that the contribution of \(P\) to the Namikawa--Weyl group is of type \(B\).
\end{proof}

\begin{table}
  \caption{Namikawa--Weyl groups for the symplectically primitive, complex imprimitive groups \(G = \langle (\mu_d T_0)^\circledast, s\rangle\).}
  \label{tab:primimprim}
  \begin{tabular}{|l|c|}
  \(T_0\)                & \(\mathcal W_G\) \\
  \hline
  \(G_5\)                & \(A_1^2 \times A_2\)\\
  \(G_7\)                & \(A_1^3 \times A_2\)  \\
  \(G_8\)                & \(A_1 \times B_2\)  \\
  \(G_9\)                & \(A_1^3 \times B_2\)  \\
  \(G_{10}\)             & \(A_1^2 \times B_2\)  \\
  \(G_{11}\)             & \(A_1^4 \times B_2\)  \\
  \(G_{12}\)             & \(A_1^2\)  \\
  \(G_{13}\)             & \(A_1^4\)  \\
  \(G_{14}\)             & \(A_1^3\)  \\
  \end{tabular}~
  \begin{tabular}{|l|c|}
  \(T_0\)                & \(\mathcal W_G\) \\
  \hline
  \(G_{15}\)             & \(A_1^5\)  \\
  \(G_{16}\)             & \(A_1^2 \times B_2\)  \\
  \(G_{17}\)             & \(A_1^3 \times B_2\)  \\
  \(G_{18}\)             & \(A_1^3 \times B_2\)  \\
  \(G_{19}\)             & \(A_1^4 \times B_2\)  \\
  \(G_{20}\)             & \(A_1^3\)  \\
  \(G_{21}\)             & \(A_1^4\)  \\
  \(G_{22}\)             & \(A_1^3\) \\
   &
  \end{tabular}
\end{table}

\begin{table}
  \caption{Namikawa--Weyl groups for the symplectically primitive, complex primitive groups \(G = W(X)\) with \(X\) a root system from \cite[Table~II]{Coh80}.}
  \label{tab:excep}
  \begin{tabular}{|l|c|}
    \(G\) & \(\mathcal W_G\) \\
    \hline
    \(W(O_1)\) & \(A_1\) \\
    \(W(O_2)\) & \(A_1\) \\
    \(W(O_3)\) & \(A_1^2\) \\
    \(W(P_1)\) & \(B_2\) \\
    \(W(P_2)\) & \(G_2\) \\
    \(W(P_3)\) & \(A_1 \times G_2\) \\
    &
  \end{tabular}~
  \begin{tabular}{|l|c|}
    \(G\) & \(\mathcal W_G\) \\
    \hline
    \(W(Q)\) & \(A_1\) \\
    \(W(R)\) & \(A_1\) \\
    \(W(S_1)\) & \(A_1\) \\
    \(W(S_2)\) & \(A_1\) \\
    \(W(S_3)\) & \(A_1\) \\
    \(W(T)\) & \(A_1\) \\
    \(W(U)\) & \(A_1\)
  \end{tabular}
\end{table}

\subsection{Symplectically primitive, complex primitive groups}
Let \(G\leqslant \Sp(V)\) be a symplectically primitive, complex primitive group with \(\dim(V) > 2\).
Then \(G\) is conjugate to one of 13 groups with root systems listed in \cite[Table~II]{Coh80}.
The minimal parabolic subgroups of these groups and their normalizers can be found in the tables in \cite[Sect.~5.2]{RS26}.
With this information one immediately constructs the Namikawa--Weyl groups using Table~\ref{tab:kleinianpairs}. The results are summarized in Table~\ref{tab:excep}.

\bibliographystyle{amsalpha}
\providecommand{\bysame}{\leavevmode\hbox to3em{\hrulefill}\thinspace}
\providecommand{\MR}{\relax\ifhmode\unskip\space\fi MR }
\providecommand{\MRhref}[2]{%
	\href{http://www.ams.org/mathscinet-getitem?mr=#1}{#2}
}
\providecommand{\href}[2]{#2}


\end{document}